\newtheorem{thm}{Theorem}[section]
\newtheorem{ques}[thm]{Question}
\newtheorem{cor}[thm]{Corollary}
\newtheorem{lem}[thm]{Lemma}
\theoremstyle{definition}
\newtheorem{defn}[thm]{Definition}
\newtheorem{remark}[thm]{Remark}
\renewcommand{\emph}[1]{{\it #1}}
\title{Degree-regular triangulations of surfaces}
\author{Basudeb Datta and Subhojoy Gupta}
\address{Department of Mathematics, Indian Institute of Science, Bangalore 560012, India.}
\thanks{This work was supported by the Infosys Foundation.}
\begin{document}
\setcounter{tocdepth}{4}
\maketitle

\begin{abstract}
A degree-regular triangulation is one in which each vertex has identical degree. 
Our main result is that any such  triangulation of a (possibly non-compact) surface  $S$ is geometric, that is, it is combinatorially equivalent to a geodesic triangulation with respect to a constant curvature metric on $S$, and we list the possibilities. A key ingredient of the proof is to show that any two $d$-regular triangulations of the plane for $d> 6 $ are combinatorially equivalent. The proof of this uniqueness result, which is of independent interest, is based on an inductive argument involving some combinatorial topology. 
\end{abstract}

\section{Introduction}

A \textit{triangulation} of a surface $S$ is an embedded graph whose complementary regions are faces bordered by exactly three edges (see \S2 for a more precise definition). Such a triangulation is $d$-regular if the valency of each vertex is exactly $d$, and  \textit{geometric} if the triangulation are geodesic segments with respect to a Riemannian metric of constant curvature on $S$, such that each face is an equilateral triangle. 
Moreover, two triangulations $\mathcal{G}_1$ and $\mathcal{G}_2$  of $S$ are \textit{combinatorially equivalent} if there is a homeomorphism $h:S\to S$ that induces a graph isomorphism from $\mathcal{G}_1$ to $\mathcal{G}_2$. 

The \textit{existence} of such degree-regular geometric  triangulations on the simply-connected model spaces ($\mathbb{S}^2, \mathbb{R}^2$ and $\mathbb{H}^2$) is well-known.
Indeed, a $d$-regular geometric triangulation of the hyperbolic plane $\mathbb{H}^2$ is generated by the Schwarz triangle group $\Delta(3,d)$  of reflections across the three sides of a hyperbolic triangle (see \S3.2 for a discussion).

In this note we prove:
\begin{thm}\label{thm0}  Any $d$-regular triangulation of a surface $S$ is combinatorially equivalent to a geometric triangulation, and one of the following three possibilities hold:

\begin{itemize}
\item $d <6$,   $S \cong S^2$ and the triangulation corresponds to the boundary of the tetrahedron, octahedron or icosahedron,
or  $S \cong \mathbb{R}\mathrm{P}^2$ and the triangulation is a two-fold quotient of the boundary of the icosahedron,

\item $d =6$,  the universal cover  $\tilde{S}= \mathbb{E}^2$ and the lift of the triangulation is the standard hexagonal  triangulation, 

\item $d >6$,  $\tilde{S}= \mathbb{E}^2$ and the lift of the triangulation is the geodesic triangulation generated by the Schwarzian triangle group $\Delta(3,d)$.
\end{itemize}

\end{thm}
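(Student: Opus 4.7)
The plan is to lift $\mathcal{G}$ to the universal cover $\tilde{S}$, identify the combinatorial type of the lifted $d$-regular triangulation $\tilde{\mathcal{G}}$, and then push the resulting geometric structure back down to $S$.

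First I would set up the Euler-characteristic dichotomy. For a finite $d$-regular triangulation the incidence counts $2E = dV = 3F$ give $\chi(S) = V(6-d)/6$. When $d<6$ this forces $\chi > 0$, so $S$ is closed and is either $\mathbb{S}^2$ or $\mathbb{RP}^2$, and the formula pins down $V$ (and hence the combinatorial type) in each subcase, yielding the tetrahedron, octahedron, icosahedron, and the two-fold icosahedral quotient on $\mathbb{RP}^2$. A local combinatorial curvature argument applied to growing combinatorial balls rules out any infinite $d$-regular triangulation when $d<6$, so the non-compact case does not arise.

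For $d\geq 6$ the combinatorial curvature at each vertex is non-positive, so $\tilde{S}\not\cong \mathbb{S}^2$ and $\tilde{S}$ is homeomorphic to the plane; hence $\tilde{\mathcal{G}}$ is a $d$-regular triangulation of $\mathbb{R}^2$. When $d=6$, a direct outward induction from a starting face shows $\tilde{\mathcal{G}}$ is combinatorially the standard hexagonal triangulation. When $d>6$, I invoke the paper's planar uniqueness lemma, which guarantees that any two such triangulations are combinatorially equivalent; comparing with the $\Delta(3,d)$ geodesic triangulation of $\mathbb{H}^2$ then fixes the model.

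To descend, the deck group $\pi_1(S)$ acts on $\tilde{S}$ by homeomorphisms inducing graph automorphisms of $\tilde{\mathcal{G}}$. Under the identification with the hexagonal or $\Delta(3,d)$ tiling, each such combinatorial automorphism extends face-by-face to an isometry of the ambient $\mathbb{E}^2$ or $\mathbb{H}^2$, since all faces are congruent equilateral triangles and each vertex link is the same combinatorial wheel. Hence the deck action is topologically conjugate to an action by isometries of the model metric, and pulling back the constant-curvature structure gives $S$ a compatible geometry in which $\mathcal{G}$ is geodesic.

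The main obstacle is the planar uniqueness result for $d>6$, flagged in the abstract as the paper's key lemma: one must extract global combinatorial rigidity from a purely local degree hypothesis, with no a priori metric to exploit. Given this input, the remaining argument is essentially a structural descent from the universal cover to the quotient, together with the small-case enumeration for $d<6$.
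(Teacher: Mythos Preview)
Your proposal is correct and follows the same overall architecture as the paper: separate $d<6$ from $d\geq 6$, lift to the universal cover, invoke the planar uniqueness result (Theorem~\ref{thm1}) for $d>6$, and descend via the deck group. The one methodological difference is in the $d<6$ case: the paper does not use the Euler-characteristic count $\chi = V(6-d)/6$ at all, but instead gives a direct vertex-by-vertex combinatorial enumeration (Lemma~\ref{lem1}) that simultaneously handles the a priori possibility that $X$ is infinite and pins down the combinatorial type, not just the vertex count. Your route---Euler characteristic plus a separate combinatorial-curvature argument to rule out the infinite case---also works, but note that knowing $V$ alone does not immediately give uniqueness of the combinatorial type (especially for $d=5$, $V=12$), so some enumeration is still implicitly needed. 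For $d\geq 6$ the paper first builds the constant-curvature metric explicitly (Lemma~\ref{lem2}) and then reads off the universal cover, whereas you argue directly via combinatorial curvature; these are equivalent. Your descent argument, identifying deck transformations with isometries because they are combinatorial automorphisms of a tiling by congruent equilateral triangles, is a cleaner phrasing of what the paper does more tersely via ``isometric identifications along the edges of a fundamental domain.''
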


In the case when $S$ is compact, this was proved in \cite{Kulkarnietal}; indeed, our work also gives a new proof of their result.

A key ingredient in our proof of Theorem \ref{thm0} is the following uniqueness result for triangulations of the plane:

\begin{thm}\label{thm1} For each integer $d> 6$, any two $d$-regular triangulations of $\mathbb{R}^2$ are combinatorially equivalent.
\end{thm}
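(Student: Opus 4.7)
The plan is an inductive analysis of combinatorial balls around a fixed vertex. Let $T$ be a $d$-regular triangulation of $\mathbb{R}^2$ with $d > 6$, pick $v \in T$, and let $B_k = B_k(v)$ denote the full subcomplex of $T$ on the vertices at combinatorial distance at most $k$ from $v$, with boundary cycle $S_k = \partial B_k$. I will prove by induction on $k$ that $B_k$ is a triangulated topological disk whose combinatorial isomorphism type, together with a cyclic sequence of ``vertex types'' along $S_k$, depends only on $d$ and $k$. Once this is established, choosing base vertices $v_i \in T_i$ with a prescribed directed edge out of each $v_i$ in two $d$-regular triangulations produces a compatible family of isomorphisms $B_k(v_1) \to B_k(v_2)$; since $\bigcup_k B_k(v_i) = T_i$, their direct limit is a graph isomorphism $T_1 \to T_2$, which extends across each $2$-simplex to the required homeomorphism of $\mathbb{R}^2$.

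The inductive step is essentially forced by $d$-regularity. Call $w \in S_k$ of \emph{type A} if it has two neighbors in $B_{k-1}$, and of \emph{type B} if it has exactly one. Because $B_k$ is a disk, the cyclic star of $w$ in $T$ has a contiguous ``inner'' arc (the inward and sideways neighbors) and a contiguous ``outer'' arc (the neighbors lying in $S_{k+1}$). Moreover, each boundary edge $ww'$ of $S_k$ bounds a unique outer triangle, whose third vertex is simultaneously the last outward neighbor of $w$ and the first outward neighbor of $w'$. Assembling these data produces a candidate extension $\widetilde B_{k+1}$ whose new boundary $\widetilde S_{k+1}$ is a cycle decorated by ``shared'' vertices (corresponding to edges of $S_k$, all of type A) and ``fresh'' vertices (corresponding to the interior of each outer arc, all of type B). The cyclic type pattern on $\widetilde S_{k+1}$ is completely determined by that on $S_k$, so the induction closes provided the natural map $\widetilde B_{k+1} \to B_{k+1}$ is a bijection -- that is, no ``collisions'' identify distinct constructed vertices in $T$.

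Ruling out collisions is the main obstacle and the sole place where the hypothesis $d > 6$ is used. A collision means that some $y \in S_{k+1}$ has $a \geq 3$ inward neighbors on $S_k$; by the planar embedding these neighbors are consecutive and form a sub-arc $u_1, \ldots, u_a$ of $S_k$. Equivalently, two distinct combinatorial shortest paths of length $k+1$ from $v$ to $y$ enclose a topological subdisk $D \subset \mathbb{R}^2$ triangulated by a subcomplex of $T$ and containing the interior vertices $u_2, \ldots, u_{a-1}$. I plan to derive a contradiction via discrete Gauss-Bonnet (or equivalently the Euler relation $V - E + F = 1$ combined with the triangle--edge incidence count) applied to $D$: every interior vertex has degree $d$ in $T$, hence combinatorial curvature $1 - d/6 < 0$, while the boundary vertices contribute a bounded amount; the resulting strict inequality contradicts $\chi(D) = 1$. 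This is the ``combinatorial topology'' input promised in the introduction.

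Finally, the only ambiguity in the base isomorphism $B_1(v_1) \to B_1(v_2)$ is the cyclic matching of the $d$ neighbors of $v_1$ with those of $v_2$, and this is removed by prescribing a directed edge at each $v_i$. The compatible inductive extensions then assemble into the graph isomorphism $T_1 \to T_2$, yielding the combinatorial equivalence asserted in Theorem~\ref{thm1}.
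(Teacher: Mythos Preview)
Your architecture is the paper's: exhaust $\mathbb{R}^2$ by combinatorial balls $B_k$ around a fixed vertex, track a cyclic pattern of vertex types along $S_k=\partial B_k$, and show inductively that $B_{k+1}$ is forced once collisions among the newly added vertices are excluded via an Euler-characteristic argument. Your A/B typing and shared/fresh bookkeeping match the paper's Lemma~\ref{lemma2} and Lemma~\ref{lemma5} almost verbatim.

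Where the outline breaks is the collision step. First, the assertion that the $S_k$-neighbors of a collision vertex $y$ are consecutive ``by the planar embedding'' is not a consequence of planarity: nothing yet prevents $y$ from having two non-adjacent neighbors $u,u'\in S_k$ with another $S_{k+1}$-vertex between them in $\mathrm{lk}(y)$, and excluding this is precisely one of the collisions you are trying to forbid, not something you may assume. Second, and more seriously, your disk $D$ bounded by two length-$(k{+}1)$ shortest paths from $v$ to $y$ has on the order of $2k$ boundary vertices whose degrees \emph{in $D$} are uncontrolled---a combinatorial geodesic can turn at every step, so a boundary vertex may have as few as two neighbors inside $D$, and the boundary term in discrete Gauss--Bonnet can then grow like $k$; you do not obtain a contradiction to $\chi(D)=1$. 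The paper's remedy is to work locally: its Key Lemma (Lemma~\ref{lemma1}) shows by a short Euler-characteristic count that $X$ contains no triangulated subdisk on at least four vertices in which all but three boundary vertices have degree $\ge d-2$, and each would-be collision (a common outer neighbor of two non-adjacent vertices of $S_k$, or a chord of $S_k$) is checked to produce exactly such a small forbidden disk (Corollary~\ref{cor1}). Swap your global $D$ for this local one and the rest of your outline goes through.
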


In previous work \cite{Datta1} (see also \cite{Datta1b}),  it was shown that the above result is true for $d=6$ as well; in this case the triangulation is combinatorially equivalent to the familiar hexagonal  triangulation of the Euclidean plane $\mathbb{E}^2$.

This paper develops a different technique that works for the case of a degree-regular triangulation of the hyperbolic plane $\mathbb{H}^2$, where the number of vertices in a compact subcomplex grows exponentially with the radius  (see Lemma \ref{vcount}). Our proof involves an inductive procedure to construct an exhaustion of the plane by compact triangulated regions; we start with the link of a single vertex, and at each step, add all the links of the boundary vertices. A crucial part of the argument is to ensure that at each step, the region we obtain is a triangulated disk that is determined uniquely by the preceding step; this needs a topological argument involving the Euler characteristic.

A uniqu{eness result akin to Theorem \ref{thm1} is the uniqueness  of  \textit{Archimedean tilings},  which was recently proved in \cite{Datta2} and forms the inspiration for the present work.



Note that a  \textit{tiling} (or {\em map}) of a  surface is a more general notion when the embedded graph has complementary faces that are (topological) polygons. The \textit{type} of a vertex in a tiling is the cyclic collection of integers that determines the number of sides of each polygon surrounding it.
If all the vertices in a tiling are of same type then the tiling is called a {\em semi-equivelar map} or a {\em uniform tiling}.
An {\em Archimedean} tiling of the plane $\mathbb{R}^2$ is a semi-equivelar map of $\mathbb{R}^2$ by regular Euclidean polygons; it is a classical fact that there are exactly $11$ such tilings of $\mathbb{R}^2$ (see also \cite{G-S}).




The techniques of this paper can be extended to show the uniqueness of \textit{equivelar maps}, which are uniform tilings where each vertex is of type $[p^q]$, that is, each tile is a $p$-gon and there are exactly $q$ such polygons around each vertex. This would imply that any equivelar map of a surface is geometric. 
Once again, this was shown by \cite{Kulkarnietal}, but only in the case when $S$ is compact.

However the following question is, to the best of our knowledge,  still open:

\begin{ques} Are any two uniform tilings of $\mathbb{R}^2$ with the same vertex type  combinatorially equivalent?
\end{ques}

We hope to address this in a subsequent article.



\section{Preliminaries}

We begin by setting up some definitions and notation that we shall use throughout the paper.

A standard reference for
basic terminology on graphs is \cite{BM}. For a graph $G$, $V(G)$ and
$E(G)$ will denote its vertex-set and edge-set respectively, 

\begin{defn}[Triangulation] A triangulation of a surface $S$ is an embedded graph $G$ such that
\begin{itemize}
\item all complementary regions (\textit{faces}) are (topological) triangles,
\item each edge in $E(G)$  belongs to (the closure of) exactly two triangles,
\item each vertex  in $V(G)$ belongs to at least $3$ triangles, arranged in a cyclic order around it.
\end{itemize}
\end{defn}

Further, for $v\in
V(G)$, $d_G(v)$ denotes the {\em degree} of $v$ in $G$. 
A triangulation is \textit{$d$-regular} for $d\geq 3$ if moreover, each vertex  has valency (degree) $d$.

A {\em path} with edges $u_1u_2, \dots, u_{n-1}u_n$ is denoted by $u_1\mbox{-}u_2\mbox{-} \cdots \mbox{-}u_n$. For $n\geq 3$, an \textit{$n$-cycle} with edges $u_1u_2, \dots, u_{n-1}u_n, u_nu_1$ is denoted by $u_1\mbox{-}u_2\mbox{-} \cdots \mbox{-}u_n\mbox{-}u_1$ and also by
$C_n(u_1, u_2, \dots, u_n)$.

If $G$ is a graph and $u$ is a vertex not in $G$ then the {\em join} $u\ast G$ is the simplicial complex whose vertex set is $V(G)\sqcup\{u\}$, edge-set is $E(G) \sqcup \{uv \, : \, v\in V(G)\}$ and the set of 2-simplices is $\{uvw \, : \, vw\in E(G)\}$.

By a {\em triangulated surface}, we mean a simplicial complex whose underlying topological space is a surface (possibly with boundary), such that its $1$-skeleton is a triangulation as defined above. 
Since a closed surface can not be a proper subset of a surface, it follows that a \textit{closed} triangulated surface can't be a proper subcomplex of a triangulated surface. We identify a triangulated surface by the set of 2-simplices in the complex.

If $u$ is a vertex in a 2-dimensional simplicial complex $X$ then the \textit{link} of $u$ in $X$, denoted by ${\rm lk}_X(u)$ (or simply ${\rm lk}(u)$), is the largest subgraph $G$ of $X$ such that $u\not\in G$ and $u\ast G \subseteq X$. The subcomplex $u\ast {\rm lk}_X(u)$ is called the {\em star} of $u$ in $X$ and is denoted by ${\rm st}_X(u)$ or simply by ${\rm st}(u)$. 

Clearly, the star of a vertex in a triangulated surface is a triangulated 2-disc. A 2-dimensional simplicial complex is a triangulated surface if and only if the links of vertices in $X$ are paths and cycles. Moreover, for a triangulated surface $X$, ${\rm lk}_X(u)$ is a path if and only if $u$ is a boundary vertex of $X$. If not mentioned otherwise, by a triangulated surface we mean a triangulated surface without boundary.


\section{Geometric triangulations of surfaces}

In this section we discuss constructions of $d$-regular triangulated surfaces where the triangulations are geometric. 

\subsection{Case of degree $d< 6 $}

The following result lists all possibilities of a $d$-regular triangulated surface when the degree $d<6$.

\begin{lem}\label{lem1}
Let $X$ be a triangulated surface without boundary.

\begin{enumerate}
\item[{\rm (i)}] If $X$ is $3$-regular, then $X$ is the boundary of a tetrahedron.
\item[{\rm (ii)}] If $X$ is $4$-regular, then $X$ is the boundary of an octahedron.
\item[{\rm (iii)}] If $X$ is $5$-regular, then $X$ is either the boundary of an icosahedron, or $\mathbb{R}P^2_6$  (the minimal $6$-vertex triangulation of the real projective plane).
\end{enumerate}

\end{lem}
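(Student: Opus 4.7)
The plan is to combine an Euler-characteristic computation with a link-by-link reconstruction of the triangulation.

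First, I would use double counting: for a closed $d$-regular triangulated surface $X$ with $V$ vertices, $E$ edges and $F$ faces, the identities $2E=dV$ and $2E=3F$ give
\[
\chi(S)=V-E+F=\frac{V(6-d)}{6}.
\]
For $d\in\{3,4,5\}$ this is positive, so $S\cong S^2$ or $S\cong \mathbb{R}\mathrm{P}^2$. Solving, the orientable case forces $V=12/(6-d)\in\{4,6,12\}$ and the non-orientable case forces $V=6/(6-d)\in\{2,3,6\}$. The pairs $(d,V)=(3,2)$ and $(4,3)$ are discarded because any simple $d$-regular graph needs at least $d+1$ vertices, leaving exactly the five configurations listed in the statement.

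Second, I would establish uniqueness in each remaining case. For $d=3$, $V=4$: every pair of the four vertices must be joined by an edge, so the $1$-skeleton is $K_4$, the four facial triples are all $\binom{4}{3}$ choices, and $X$ is the tetrahedron. For $d=4$, $V=6$ on $S^2$: each vertex fails to be adjacent to exactly one other vertex, so the non-edges form a perfect matching on $6$ vertices, the $1$-skeleton is $K_{2,2,2}$, and the requirement that each vertex-link be a $4$-cycle on its four neighbors forces the facial structure of the octahedron. For $d=5$, $V=6$ on $\mathbb{R}\mathrm{P}^2$: every pair of distinct vertices must be adjacent, so the $1$-skeleton is $K_6$; the link of any vertex is then a $5$-cycle on the remaining five vertices, which pins the triangulation down to $\mathbb{R}\mathrm{P}^2_6$.

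The remaining sub-case $d=5$, $V=12$ on $S^2$ is the one I would treat by a reconstruction. Fix $v_0$ with $\ope{lk}(v_0)=C_5(v_1,\ldots,v_5)$ and write
\[
\ope{lk}(v_i)=C_5(v_{i-1},v_0,v_{i+1},p_i,q_i)
\]
(indices mod $5$). A short link argument rules out $p_i$ (or $q_i$) being any $v_j$: for instance $p_i=v_{i+2}$ would make $v_0,v_i,v_{i+2}$ mutually adjacent inside the $5$-cycle $\ope{lk}(v_{i+1})$, producing a triangle in a pentagon, which is impossible. Matching the second face at edge $v_i v_{i+1}$ from the two sides forces $p_i=q_{i+1}$, and I write $u_i$ for this common new vertex. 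A counting argument at a hypothetical merged vertex shows the $u_i$ are pairwise distinct: any coincidence $u_i=u_j$ would either exceed degree $5$ at that vertex or create a self-loop among the $u_\ell$'s. Eleven vertices are thus accounted for; repeating the link analysis at each $u_i$ yields a fifth neighbor, and comparing the triangles along the edges $u_i u_{i+1}$ forces these fifth neighbors to coincide with the one remaining vertex $w$, whose link must be $C_5(u_1,\ldots,u_5)$. The resulting complex is combinatorially the boundary of the icosahedron.

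The main obstacle is this last sub-case: the Euler calculation delivers only the vertex count, and to extract the icosahedron one must propagate the link condition vertex by vertex, using at each step that a $5$-cycle link admits neither a chord nor a repeated vertex in order to exclude the unwanted coincidences between the new vertices and the ones already placed.
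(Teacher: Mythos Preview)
Your argument is sound once $X$ is known to be finite, but that is precisely the assumption the paper is careful to avoid. The identity $\chi(S)=V(6-d)/6$ only makes sense when $V,E,F$ are finite numbers, and the lemma as stated does not hypothesize compactness; the paper in fact remarks immediately after the statement that its proof does \emph{not} assume $X$ finite, consistent with the paper's overall setting of possibly non-compact surfaces. So your opening Euler-characteristic step is circular: you invoke finiteness to obtain the vertex counts that then drive the rest of the argument, but finiteness is part of what the lemma is meant to establish.

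The paper sidesteps this by running the link-by-link reconstruction from the outset, with no global count in hand. Starting at an arbitrary vertex it chases links and shows the complex must close up into $\partial\Delta$, $\partial O$, $\partial\mathcal{I}$, or $\mathbb{RP}^2_6$; since a closed triangulated surface cannot sit as a proper subcomplex of a triangulated surface, $X$ equals that finite complex and compactness is a \emph{conclusion}. Your icosahedron reconstruction is essentially the paper's, but your treatment of $\mathbb{RP}^2_6$ leans on the count $V=6$ to get the $K_6$ skeleton, whereas the paper reaches $\mathbb{RP}^2_6$ through a case split: if the second triangle on an edge of $\mathrm{lk}(0)$ reuses a vertex of that link one is forced into $\mathbb{RP}^2_6$, and otherwise new vertices appear until the icosahedron closes. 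If you drop the Euler step and carry out the reconstruction unconditionally (your $d=5$ argument already nearly does this), the gap closes and the two proofs coincide.
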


\begin{remark}
{\rm Note that in the following proof of Lemma \ref{lem1} we do not assume that  $X$ is finite.}
\end{remark}

\setlength{\unitlength}{1.75mm}

\begin{picture}(83,49.5)(-15,-6.5)

{\boldmath{

\thicklines

\put(0,0){\line(1,0){60}} \put(0,0){\line(3,4){30}}
\put(0,0){\line(5,1){30}} \put(0,0){\line(5,3){20}}
\put(60,0){\line(-3,4){30}} \put(60,0){\line(-5,3){20}}
\put(60,0){\line(-1,1){20}} \put(20,20){\line(1,2){10}}
\put(30,24){\line(0,1){16}}

\put(20,12){\line(5,-3){10}} \put(20,12){\line(5,6){5}}
\put(20,12){\line(0,1){8}}

\put(30,12){\line(1,0){10}} \put(30,12){\line(-5,6){5}}
\put(30,12){\line(5,6){5}}

\put(30,6){\line(5,3){10}} \put(30,6){\line(0,1){6}}

\put(40,12){\line(0,1){8}}

\put(20,20){\line(5,-2){5}} \put(20,20){\line(5,2){10}}
\put(40,20){\line(-5,-2){5}} \put(40,20){\line(-5,2){10}}

\put(25,18){\line(1,0){10}} \put(35,18){\line(-5,6){5}}

}}

\thinlines

\put(-15,-6.5){\line(1,0){80}} \put(-15,43){\line(1,0){80}}

\put(0,0){\line(1,1){2.1}} \put(2.55,2.55){\line(1,1){2.1}}
\put(5.1,5.1){\line(1,1){2.1}} \put(7.65,7.65){\line(1,1){2.1}}
\put(10.2,10.2){\line(1,1){2.1}}
\put(12.75,12.75){\line(1,1){2.1}}
\put(15.3,15.3){\line(1,1){2.1}}
\put(17.85,17.95){\line(1,1){2.15}}

\put(30,6){\line(5,-1){3}} \put(33.5,5.3){\line(5,-1){3}}
\put(37,4.6){\line(5,-1){3}} \put(40.5,3.9){\line(5,-1){3}}
\put(44,3.2){\line(5,-1){3}} \put(47.5,2.5){\line(5,-1){3}}
\put(51,1.8){\line(5,-1){3}} \put(54.5,1.1){\line(5,-1){3}}
\put(60,0){\line(-5,1){2.1}}

\put(40,20){\line(-1,2){1.5}} \put(38.3,23.4){\line(-1,2){1.5}}
\put(36.6,26.8){\line(-1,2){1.5}}
\put(34.9,30.2){\line(-1,2){1.5}}
\put(33.2,33.6){\line(-1,2){1.5}} \put(31.5,37){\line(-1,2){1.5}}

\put(20,12){\line(1,0){2.25}} \put(22.75,12){\line(1,0){2}}
\put(25.25,12){\line(1,0){2}} \put(27.75,12){\line(1,0){2.25}}

\put(40,12){\line(-5,6){1.68}} \put(38,14.4){\line(-5,6){1.68}}
\put(36,16.8){\line(-5,6){1.68}}

\put(30,24){\line(-5,-6){1.68}} \put(28,21.6){\line(-5,-6){1.68}}
\put(26,19.2){\line(-5,-6){1.68}}

\put(-3,1){\mbox{$4^{\prime}$}} \put(60.5,1){\mbox{$3^{\prime}$}}
\put(32,39.5){\mbox{$0^{\prime}$}} \put(31,24.5){\mbox{$1^{\prime}$}}
\put(31,10){\mbox{$0$}} \put(29,4){\mbox{$1$}}
\put(41,12.5){\mbox{$5$}} \put(41,20){\mbox{$2^{\prime}$}}
\put(16.5,12.5){\mbox{$2$}} \put(16.5,20){\mbox{$5^{\prime}$}}
\put(27,18.75){\mbox{$3$}} \put(34.4,19.7){\mbox{$4$}}

\put(30,-3){\mbox{$\partial \, {\mathcal I}$}}

\put(20,-5){\mbox{\bf Figure 0}}

\thicklines

\put(-10,28){\line(5,-3){10}} \put(-10,28){\line(5,6){5}}
\put(-10,28){\line(0,1){8}}

\put(0,28){\line(1,0){10}} \put(0,28){\line(-5,6){5}}
\put(0,28){\line(5,6){5}}

\put(0,22){\line(5,3){10}} \put(0,22){\line(0,1){6}}

\put(10,28){\line(0,1){8}}

\put(-10,36){\line(5,-2){5}} \put(-10,36){\line(5,2){10}}
\put(10,36){\line(-5,-2){5}} \put(10,36){\line(-5,2){10}}

\put(-5,34){\line(1,0){10}} \put(5,34){\line(-5,6){5}}


\put(-10,28){\line(1,0){10}} \put(10,28){\line(-5,6){5}}
\put(-5,34){\line(5,6){5}}

\put(1,40.5){\mbox{$1$}} \put(1,26){\mbox{$0$}}
\put(-3,21){\mbox{$1$}} \put(11,28.5){\mbox{$5$}}
\put(11,36){\mbox{$2$}} \put(-12.5,28.5){\mbox{$2$}}
\put(-12.5,36){\mbox{$5$}} \put(-3,34.75){\mbox{$3$}}
\put(4.4,35.7){\mbox{$4$}}

\put(2,19){\mbox{$\mathbb{R P}^{\,2}_{6}$}}




\end{picture}


\begin{proof}
\textit{Part} (i). Let $a$ be a vertex whose link is $C_3(b, c, d)$. Then $c\mbox{-}a\mbox{-}d \subseteq {\rm lk}_X(b)$. Since, $\deg(b)=3$, it follows that ${\rm lk}_X(b)= C_3(c, a, d)$. Thus, $bcd$ is a simplex and hence the boundary $\partial \Delta$ of the tetrahedron $\Delta = abcd$ is a subcomplex of $X$. Since, $\partial \Delta$ is a closed triangulated surface, it follows that $X= \partial \Delta$.

\noindent \textit{Part} (ii).  Let $a$ be a vertex whose link is $C_4(b, c, d, e)$. Then $c\mbox{-}a\mbox{-}e \subseteq {\rm lk}_X(b)$. Since, $\deg(b)=4$, it follows that ${\rm lk}_X(b)= C_3(c, a, e, f)$ for some vertex $f$. If $d=f$ then $C_3(d, a, b) \subseteq {\rm lk}_X(c)$. This is not possible. So, $f$ is a new vertex. Then $d\mbox{-}a\mbox{-}b\mbox{-}f \subseteq {\rm lk}_X(c)$ and hence ${\rm lk}_X(c)= C_4(d, a, b, f)$. Similarly, ${\rm lk}_X(e)= C_4(d, a, b, f)$. Then $abc, acd, ade, aeb$, $bcf, cdf, def, ebf$ are simplices in $X$. Since these eight 2-simplices form the boundary of an octahedron $O$, it follows $X= \partial O$.

\textit{Part} (iii). Let $0$ be a vertex and ${\rm lk}_X(0) = C_5(1,2,3,4,5)$.
Let $34x\neq 034$ be another 2-simplex containing the edge $34$.
If $x=5$ then $C_3(5,0,3) \subseteq {\rm lk}(4)$, a contradiction. So, $x\neq 5$. Similarly, $x\neq 2$. So, $x=1$ or a new vertex.

First assume that $x=1$. Then the path $5\mbox{-}0\mbox{-}2$ and the edge $34$ are part of ${\rm lk}_X(1)$. Since $123$ is not a simplex (otherwise $C_3(3, 0, 1)$ would be a part of ${\rm lk}_X(2)$), it follows that ${\rm lk}_X(1)= c_5(5, 0, 2, 4, 3)$. Then $5\mbox{-}0\mbox{-}3\mbox{-}1\mbox{-}2 \subseteq {\rm lk}_X(4)$ and hence ${\rm lk}_X(4)= C_5(5, 0, 3, 1, 2)$. Similarly, ${\rm lk}_X(3)= C_5(2, 0, 4, 1, 5)$.
Then the closed triangulated surface $\mathbb{RP}^2_6$ (see Fig. 1) is a subcomplex of $X$ and hence $X= \mathbb{RP}^2_6$.

If $235$, $341$, $452$ or $513$ is a simplex then by the same argument as above it follows that $X=\mathbb{RP}^2_6$. So, assume that $124$, $235$, $341$, $452$, $513$ are not simplices.

Therefore, let $341^{\prime}$ be a simplex where $1^{\prime}$ is a new vertex.
Then ${\rm lk}_X(4)$ is of the form $C_5(5, 0, 3, 1^{\prime}, y)$ for some vertex $y$. 
If $y=1$ then $C_3(1, 0, 4) \subseteq {\rm lk}_X(5)$. Thus, $y\neq 1$. Since $245$ is not a simplex, $y\neq 2$. So, $y$ is a new vertex, say $y=2^{\prime}$. Thus ${\rm lk}_X(4) = C_5(5, 0, 3, 1^{\prime}, 2^{\prime})$.
This implies that ${\rm lk}_X(5) = C_5(1, 0, 4, 2^{\prime}, z)$ for some vertex $z$. If $z = 1^{\prime}$ then $C_3(5, 4, 1^{\prime})\subseteq {\rm lk}_X(2^{\prime})$, a contradiction.
So, ${\rm lk}_X(5) = C_5(1, 0, 4, 2^{\prime}, 3^{\prime})$ for some new vertex $3^{\prime}$.
Then ${\rm lk}_X(1)$ would be of the form $C_5(2, 0, 5, 3^{\prime}, w)$ for some vertex $w$. By similar argument, $w\neq 1^{\prime}$. If $w=2^{\prime}$ then $2^{\prime}4, 2^{\prime}5, 2^{\prime}1^{\prime}, 2^{\prime}3^{\prime}, 2^{\prime}1, 2^{\prime}2$ are edges and hence $\deg(2^{\prime}) > 5$, a contradiction. So, $w$ a new vertex. Thus,
${\rm lk}_X(1) = C_5(2, 0, 5, 3^{\prime}, 4^{\prime})$ for some new vertex $4^{\prime}$. By similar argument, ${\rm lk}_X(2) = C_5(3, 0, 1, 4^{\prime}, 5^{\prime})$ for some new vertex $5^{\prime}$.
Then $1^{\prime}\mbox{-}4\mbox{-}0\mbox{-}2\mbox{-}5^{\prime} \subseteq {\rm lk}_X(3)$ and hence ${\rm lk}_X(2)=C_5(3, 0, 1, 4^{\prime}, 5^{\prime})$ (see Fig. 1).

Now,  $5^{\prime}\mbox{-}3\mbox{-}4\mbox{-}2^{\prime} \subseteq {\rm lk}_X(1^{\prime})$. Thus, ${\rm lk}(1^{\prime}) = C_5(5^{\prime}, 3, 4, 2^{\prime}, a)$ for some vertex $a$. If $a=3^{\prime}$ then $C_4(3^{\prime}, 1^{\prime}, 4, 5) \subseteq {\rm lk}_X(2^{\prime})$, a contradiction. So, $a\neq 3^{\prime}$. Similarly, $a\neq 4^{\prime}$. Thus, $a = 0^{\prime}$ for some new vertex $0^{\prime}$. Then $0^{\prime}\mbox{-}1^{\prime}\mbox{-}4\mbox{-}5 \mbox{-}3^{\prime} \subseteq {\rm lk}_X(2^{\prime})$ and hence ${\rm lk}_X(2^{\prime})= C_5(0^{\prime}, 1^{\prime}, 4, 5, 3^{\prime})$. Thus, $0^{\prime}2^{\prime}3^{\prime}$ is a simplex. Similarly, from the links of $5^{\prime}$ and $0^{\prime}$, we get the simplices $0^{\prime}4^{\prime}5^{\prime}$ and $0^{\prime}3^{\prime}4^{\prime}$ respectively.
Then the closed triangulated surface $\partial\,{\mathcal I}$ (boundary of an icosahedron ${\mathcal I}$) is a subcomplex of $X$. This implies $X= \partial\,{\mathcal I}$. This completes the proof.
 \end{proof}

 \subsection{Case of degree $d\geq 6$}

\begin{lem}\label{lem2} A triangulated surface with a $6$-regular triangulation admits a Euclidean (zero curvature) metric, and a $d$-regular triangulation for $d>6$ admits a hyperbolic (constant negative curvature) metric. 
\end{lem}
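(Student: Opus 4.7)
The plan is to endow the triangulated surface with a constant curvature metric by replacing each combinatorial 2-simplex with an isometric copy of a model equilateral triangle, chosen so that the cone angle at every vertex is exactly $2\pi$.

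First, for each $d \geq 6$, I would fix a model equilateral triangle $T_d$ whose interior angle is $2\pi/d$. For $d = 6$, this is the Euclidean equilateral triangle, which has interior angle $\pi/3 = 2\pi/6$. For $d > 6$, one needs a hyperbolic equilateral triangle whose interior angle is $2\pi/d < \pi/3$; the existence of such a triangle is standard (one can construct it via the Schwarz triangle group $\Delta(3,d)$, as already mentioned in \S3.2; alternatively, the interior angle of a hyperbolic equilateral triangle varies continuously from $\pi/3$ down to $0$ as the side length varies from $0$ to $\infty$, so every value in $(0, \pi/3)$ is attained).

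Next I would define a piecewise-constant-curvature metric on $S$ by identifying each 2-simplex of the triangulation with an isometric copy of $T_d$, and gluing adjacent triangles along shared edges via the unique orientation-reversing isometry matching the two edges (all edges of $T_d$ have the same length, so this is well-defined). The resulting metric is automatically locally modeled on $\mathbb{E}^2$ (when $d=6$) or $\mathbb{H}^2$ (when $d > 6$) on the interior of each face and across each edge, since two copies of $T_d$ glued along an edge sit isometrically inside the appropriate model space.

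The only places smoothness could fail are at vertices. At a vertex $v$, the star ${\rm st}(v)$ consists of exactly $d_G(v) = d$ simplices arranged cyclically around $v$, each contributing an angle of $2\pi/d$ at $v$. The total angle is therefore $d \cdot (2\pi/d) = 2\pi$, so $v$ is a smooth (non-conical) point of the metric. Consequently, the glued metric is a genuine Riemannian metric of constant curvature $0$ (for $d=6$) or $-1$ (for $d > 6$) on $S$, and the given combinatorial triangulation is geometric with respect to this metric. The argument is entirely local and does not require $S$ to be compact or orientable, so no obstacle arises there; the main content is simply ensuring the existence of the model triangle $T_d$ with the correct angle, which is the step worth checking carefully.
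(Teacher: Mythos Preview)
Your proposal is correct and follows essentially the same approach as the paper: equip each face with an equilateral model triangle of interior angle $2\pi/d$ (Euclidean for $d=6$, hyperbolic for $d>6$), glue along edges, and verify that the total angle at each vertex is $d\cdot(2\pi/d)=2\pi$. Your write-up is somewhat more detailed (you justify the existence of the hyperbolic model triangle and spell out the gluing), but the argument is the same.
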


\begin{proof}
For $d=6$, equip each triangle with a flat (Euclidean) metric in which each side is of the same length, and each interior  angle is $\pi/3$. Since there are 6 triangles around each vertex of the triangulation, one obtains a smooth flat metric on the entire triangulated surface. (Note that the metric matches on the sides since all sides are of the same length). 

Similarly, for $d>6$, we equip each triangle with a hyperbolic metric such that each side is of equal length and  each interior angle is $2\pi/d$. Once again, since there are $d$ triangles around each vertex, the total angle around each vertex is $2\pi$ and one obtains a smooth metric of constant negative curvature on the surface.  \end{proof}

 \begin{cor} A  triangulated surface with a $d$-regular triangulation for $d\geq 6$ must have universal cover $\mathbb{E}^2$ or $\mathbb{H}^2$, both homeomorphic to the plane $\mathbb{R}^2$.
 \end{cor}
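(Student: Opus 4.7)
The plan is to extract the universal-cover statement directly from Lemma~\ref{lem2} plus the classical Killing--Hopf classification of simply connected complete constant-curvature surfaces. Lemma~\ref{lem2} produces a smooth Riemannian metric of constant curvature on $S$: zero for $d=6$ and negative for $d>6$. Crucially, the total angle at each vertex is $d \cdot (2\pi/d) = 2\pi$, so no cone point is introduced and the metric is genuinely smooth everywhere. Pulling back along the universal covering gives a simply connected Riemannian $2$-manifold $\til S$ of the same constant curvature. If I can show this metric is complete, then Killing--Hopf identifies $\til S$ isometrically with $\mathbb{E}^2$ (when $d=6$) or $\mathbb{H}^2$ (when $d>6$), and the ``homeomorphic to $\R^2$'' clause of the corollary is automatic.

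So everything reduces to showing completeness, and I would do this by establishing the stronger property that closed balls in $S$ are compact and then invoking Hopf--Rinow. Completeness of $\til S$ then follows because completeness lifts through local isometries. Let $\ell$ denote the common edge length of the equilateral triangles, and fix $p\in S$. Any point at distance $\leq R$ from $p$ sits inside a closed triangle all of whose vertices lie within distance $R+\ell$ of $p$; since at most $d$ triangles meet at any vertex, it suffices to bound the number of vertices in $\ove{B}_{R+\ell}(p)$.

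The only real work is this finiteness count, and it is mild. Distinct vertices are separated by at least the edge length $\ell$, so disjoint half-disks of radius $\ell/2$ around them embed isometrically into a slightly larger ball in the model geometry $\mathbb{E}^2$ or $\mathbb{H}^2$. An area comparison in the model then bounds the number of such vertices by a constant depending only on $R$ and $d$. Alternatively one could argue purely combinatorially by peeling off successive ``generations'' of vertex links starting at $p$ and using $d$-regularity to bound each generation. Either way, $\ove{B}_R(p)$ lies inside finitely many compact triangles and is compact, so $S$ is complete, and the classification delivers the corollary. The only subtlety I would be careful about is the distinction between the piecewise-flat (or piecewise-hyperbolic) metric and a genuine constant-curvature metric at vertices, but as noted above this is already settled by the angle matching in Lemma~\ref{lem2}.
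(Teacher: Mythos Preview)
Your argument is correct and is in fact more thorough than the paper's treatment, which simply states the corollary immediately after Lemma~\ref{lem2} with no further justification. You have correctly isolated completeness of the constant-curvature metric from Lemma~\ref{lem2} as the only nontrivial point, after which Killing--Hopf applied to the simply connected universal cover finishes the argument.

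One small imprecision: the area comparison, as you phrase it, does not quite go through. You cannot embed the collection of disjoint $\ell/2$-balls ``into a slightly larger ball in the model geometry'' before you already know that the universal cover \emph{is} the model; what you would actually need is an upper bound on $\mathrm{Area}\bigl(\ove{B}_{R+3\ell/2}(p)\bigr)$ inside $S$ itself, and the usual route to such a bound (Bishop--Gromov) is stated for complete manifolds, which risks circularity. The simplest repair avoids global area bounds altogether: observe that there is a uniform $\ep>0$, for instance $\ep = h - R_{\mathrm{circ}}$ where $h$ is the altitude and $R_{\mathrm{circ}}$ the circumradius of the equilateral triangle (positive in both $\mathbb{E}^2$ and $\mathbb{H}^2$, since $h = R_{\mathrm{circ}} + r_{\mathrm{in}}$), such that every closed ball $\ove{B}_\ep(p)$ is contained in the star of a single vertex and is therefore compact. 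Any Cauchy sequence is eventually confined to such a ball, so $S$ is complete directly. Your combinatorial ``peeling'' alternative would also work and is close in spirit to what the paper does later in Lemma~\ref{lemma2}.
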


 Thus, for a surface $S$ with universal cover $\mathbb{R}^2$,  the \textit{existence} of geometric degree-regular triangulations on $S$  when  $d\geq 6$ reduces to the existence of such triangulations on the plane, by passing to the universal cover:

A $6$-regular triangulation of $\mathbb{E}^2$ is the usual hexagonal tiling, which is geometric with respect to the standard Euclidean metric. 
A $d$-regular triangulation of $\mathbb{H}^2$ for $d>6$ can be constructed by taking a hyperbolic geodesic triangle $T_0$ with interior angles $\frac{\pi}{2}, \frac{\pi}{3}$ and $\frac{\pi}{d}$, and considering the Schwarz triangle group
\begin{equation*}
\Delta(3,d) = \langle a, b, c  \text{ } \vert\text{ }a^3= b^d = c^2 =1 \rangle 
\end{equation*}
which is generated by reflections on the three sides.

This is a discrete (Fuchsian) group and the orbit of $T_0$ under its action is a geometric triangulation of $\mathbb{H}^2$ which is a first barycentric subdivision of a $d$-regular triangulation. (See Figure \ref{reg-tiling}.) In fact, the symmetry group the resulting triangulation is $\Delta(3,d)$; for a discussion, historical references, and an extension of the construction to include other uniform tilings with symmetry group $\Delta(p,q)$, refer to Section 8 of \cite{Kulkarnietal}.



\begin{figure}
  \centering
  \includegraphics[scale=.45]{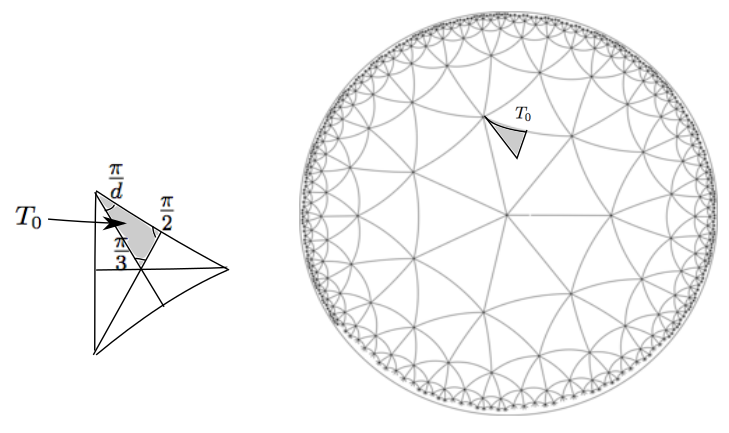}\\
  \caption{A $7$-regular triangulation of the hyperbolic plane is associated to the $(2,3,7)$-triangle group that reflects along the sides of a geodesic triangle $T_0$. }
  \label{reg-tiling}
\end{figure}

\section{Proof of Theorem \ref{thm1}}

 Throughout this section, let $X$ be some $d$-regular triangulation of $\mathbb{R}^2$, where $d\geq 7$.

 We shall prove the uniqueness result by an inductive argument; namely, we show that any such triangulation has a compact exhaustion  $$X_0 = \mathrm{st}(v_0)  \subset X_1 \subset X_2 \subset \cdots \subset  X_k \subset X_{k+1} \subset  \cdots$$

 by triangulated disks (see \S3.2) such that $X_{k+1}$ is determined uniquely by $X_k$ (see \S 4.4).



\subsection{Key lemma}

The following observation will be used repeatedly.

\begin{lem}\label{lemma1} There does not exist a triangulated disk $D$ such that\\
(i) $D \subset X$, (ii) $\# \text{ vertices of }D \geq 4$, and (iii) degrees of all boundary vertices, except possibly three, are $\geq d-2$.
\end{lem}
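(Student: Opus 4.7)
The plan is to derive a contradiction by combining Euler's formula on $D$ with the $d$-regularity of $X$. Suppose such a $D$ exists, and let $V_i, V_b$ denote the numbers of interior and boundary vertices, with $V = V_i + V_b \geq 4$ by hypothesis (ii), and let $E, F$ be the total edge and triangle counts. A key preliminary observation is that every interior vertex $v$ of $D$ satisfies $d_D(v) = d$: its link in $D$ is a cycle contained in its link in $X$, and the latter is a $d$-cycle by $d$-regularity, so the two links must coincide. Since $\partial D$ is a $V_b$-cycle, each boundary edge of $D$ lies in exactly one triangle of $D$ and each interior edge in exactly two, giving the incidence relation $3F = 2E - V_b$. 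Combined with the disk Euler relation $V - E + F = 1$, this yields $E = 3V_i + 2V_b - 3$.

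Next I would apply the handshake identity $\sum_v d_D(v) = 2E$. Splitting interior and boundary contributions and using $d_D(v) = d$ on the interior, this gives $\sum_{v \in \partial D} d_D(v) = 2E - dV_i = (6-d)V_i + 4V_b - 6$. Independently, each boundary vertex is incident to its two boundary edges, so $d_D(v) \geq 2$ always, while hypothesis (iii) asserts that at most three boundary vertices fail to have $d_D(v) \geq d-2$. Together this gives the reverse estimate $\sum_{v \in \partial D} d_D(v) \geq (V_b - 3)(d-2) + 6$.

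Comparing the two expressions and rearranging collapses to $(d-6)(V_i + V_b) \leq 3(d-6)$; since $d \geq 7$, dividing by $d-6$ produces $V \leq 3$, contradicting $V \geq 4$. The argument is tight precisely because $d > 6$, which is the only place the hypothesis on $d$ is used. Two mild points of care: condition (iii) must be read as a statement about the intrinsic degree $d_D(v)$ inside the subcomplex (the degree in $X$ is always $d$, which would make the hypothesis vacuous), and the equality $d_D(v)=d$ for interior $v$ must be justified from the definition of triangulated surface. Beyond these, the proof is a clean Euler-characteristic bookkeeping calculation rather than a combinatorial case analysis.
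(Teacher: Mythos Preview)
Your proof is correct and follows essentially the same Euler-characteristic strategy as the paper: both combine $\chi(D)=1$ with a degree count exploiting $d\geq 7$ to force too few vertices. Your organization via the handshake identity and the relation $3F=2E-V_b$ is slightly cleaner than the paper's direct tally of $f_0-f_1+f_2$ by degree classes, and it handles the hypothesis $d_D(v)\geq d-2$ as a genuine inequality rather than implicitly assuming equality with $d-2$ or $d-1$, but the underlying idea is identical.
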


\begin{proof}

  We shall obtain a contradiction to show such a triangulated disk $R$ cannot exist,  by an Euler characteristic argument as follows:\\

    Let  $$m_2 = \# \text{ boundary  vertices of } R \text{ of degree }  d-2,$$
        $$m_1 = \# \text{ boundary vertices of } R \text{ of degree }  d-1, $$
    $$m_0 = \# \text{ vertices of } R \text{ of degree }  d =  \# \text{ internal vertices of } R .$$


  Also, let $d_x$, $d_v$ and $d_w$ be the degree of the three exceptional  vertices $x$, $v$ and $w$ respectively.

  Note that by assumption (ii), we have $m\geq 1$.

  Let $f_0,f_1$ and $f_2$ be the number of vertices, edges and triangles of $R$ respectively.

Then we have:
  \begin{equation}
  f_0 =  3 + m_0+ m_1+m_2 ,
  \end{equation}

  where the $3$  corresponds to the vertices $x,v$ and $w$.

 We get a count of the number of edges by considering the degrees of the vertices:
 \begin{equation}
  f_1 =  \frac{d_v + d_w + d_x +  d\cdot m_0 + (d-1) \cdot m_1 + (d-2) \cdot m_2}{2}
    \end{equation}

    and we get the count
     \begin{equation}
  f_2 =  \frac{(d_v-1) + (d_w-1) + (d_x-1) +  d\cdot m_0 + (d-2) \cdot m_1 + (d-3) \cdot m_2}{3}
    \end{equation}

Thus we calculate:
\begin{align*}
f_0-f_1 + f_2 =  \left(\frac{12- (d_x + d_v + d_w)}{6}\right)  + m_0\left(\frac{6-d}{6}\right) + m_1\left( \frac{5-d}{6}\right)  +  m_2\left(\frac{6-d}{6}\right)
\end{align*}
and so
\begin{align*}
f_0-f_1 + f_2 <  \left(\frac{12- (d_x + d_v + d_w)}{6}\right)
\end{align*}
since the remaining terms are negative because $d\geq 7$.

Since $x,v,w$ are boundary vertices, each has at least two adjacent vertices on $\partial D$, and we have
\begin{equation*}
d_x + d_v + d_w \geq 6.
\end{equation*}

Using this, we obtain
\begin{equation*}
\chi(R) = f_0 - f_1 + f_2  <  \left(\frac{12- 6}{6}\right) =  1
\end{equation*}
which contradicts the fact that $R$ is a disk and hence $\chi(R) =1$.
\end{proof}

\medskip

As special cases, we obtain:\\

\begin{cor}\label{cor1}

(i) The subcomplex of $X$ shown in Figure 2 does not exist:

\begin{figure}[h]
  \centering
  \includegraphics[scale=.35]{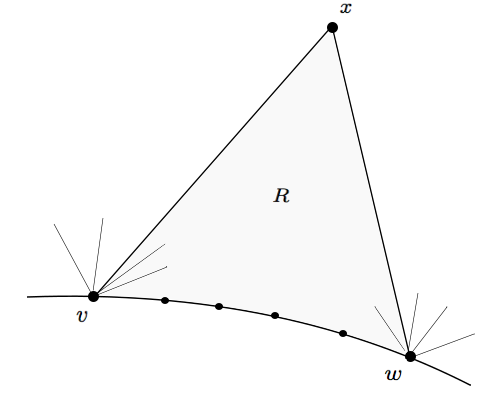}\\
  \caption{  }
  \label{case1}
\end{figure}

where all boundary vertices of the triangulated disks  (except $x$, $v$, $w$) have degrees $d-1$ or $d-2$. \\

(ii) The  subcomplex of $X$  shown in Figure 3 does not exist:

\begin{figure}[h]
  \centering
  \includegraphics[scale=.35]{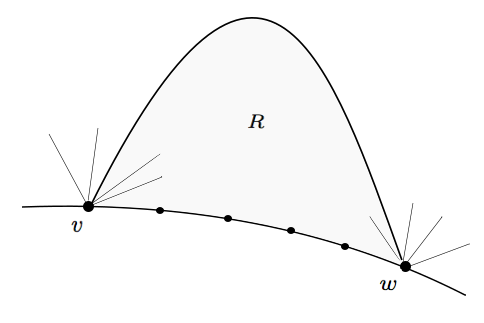}\\
  \caption{  }
  \label{case2}
\end{figure}

where all boundary vertices of the triangulated disks  (except $v$, $w$) have degrees $d-1$ or $d-2$. \\

\end{cor}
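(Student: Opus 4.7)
The plan is to apply Lemma \ref{lemma1} directly to the triangulated disks depicted in Figures \ref{case1} and \ref{case2}; no further combinatorial input is required. I first note that each figure exhibits a bounded planar subcomplex of $X$ whose boundary is a simple cycle and whose complementary interior regions are the visible 2-simplices. Hence each picture is a triangulated disk $D \subset X$, and by inspection $D$ contains well more than four vertices, so hypotheses (i) and (ii) of Lemma \ref{lemma1} are satisfied.

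For part (i), the figure names three boundary vertices $x$, $v$, $w$ with unconstrained degrees, while every other boundary vertex of $D$ carries a label indicating degree $d-1$ or $d-2$. Since $d-1,\, d-2 \geq d-2$, hypothesis (iii) of Lemma \ref{lemma1} is satisfied with exactly the three exceptions $\{x,v,w\}$. Lemma \ref{lemma1} then rules out the existence of such a $D$ inside $X$.

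For part (ii), only two boundary vertices $v$ and $w$ are singled out, and all remaining boundary vertices have degree $d-1$ or $d-2$. The clause ``except possibly three'' in Lemma \ref{lemma1} accommodates the case of just two exceptions (we may, if desired, formally promote any one other boundary vertex to a third ``exceptional'' vertex, since the lemma's bound on $d_x + d_v + d_w$ uses only that boundary vertices have at least two boundary neighbors, which is automatic). Hence the lemma applies verbatim and gives the contradiction.

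The main (and essentially only) obstacle is the bookkeeping step of reading off from each figure that every non-distinguished boundary vertex of the displayed disk is indeed of degree $d-1$ or $d-2$; once this is confirmed the corollary reduces to a citation of Lemma \ref{lemma1}. These two configurations are the specific local patterns that will come up repeatedly in the inductive construction of the exhaustion $X_0 \subset X_1 \subset \cdots$ in \S 4.4, so it is convenient to record their impossibility as a corollary here.
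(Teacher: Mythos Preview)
Your proposal is correct and matches the paper's approach: the corollary is stated immediately after Lemma~\ref{lemma1} as a special case, with no separate proof given. Your write-up simply spells out why the hypotheses of Lemma~\ref{lemma1} are met by the depicted disks, which is exactly the intended reading.
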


 \vspace{.5in}

\begin{cor}\label{cor2}
If $uv$ is an edge then the number of vertices adjacent to both $u$ and $v$ is exactly $2$.
\end{cor}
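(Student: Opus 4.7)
The plan is to reduce to the ``at most two'' direction, since the ``at least two'' direction is automatic from the definition of a triangulation: the edge $uv$ belongs to exactly two triangles $uvw_1$ and $uvw_2$, which give two distinct common neighbours of $u$ and $v$. I will rule out a third common neighbour by contradiction, using Lemma~\ref{lemma1} in its most permissive form, allowing all three exceptional boundary vertices.

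Suppose for contradiction that there exists a third common neighbour $w \notin \{w_1, w_2\}$. Since $uv$ already lies in its maximum of two triangles of $X$, the 2-simplex $uvw$ is not a face of $X$, yet the three edges $uv$, $vw$, $wu$ all do belong to $X$. Their union is a graph-theoretic $3$-cycle whose geometric realisation, by planarity of the embedding of $X$, is a Jordan curve $\gamma \subset \mathbb{R}^2$. The Jordan curve theorem produces a bounded complementary component of $\gamma$ whose closure $R$ is a topological disk, and since $X$ triangulates all of $\mathbb{R}^2$, the subcomplex of $X$ supported on $R$ is a triangulated disk with boundary cycle $u\mbox{-}v\mbox{-}w\mbox{-}u$.

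To apply Lemma~\ref{lemma1} to $R$: condition $(i)$ ($R \subset X$) is immediate, and for condition $(iii)$ I designate $u$, $v$, $w$ as the three exceptional boundary vertices, which already exhausts $\partial R$, so the constraint on any remaining boundary vertices is vacuous. Condition $(ii)$, that $R$ has at least four vertices, holds because a triangulated disk whose boundary cycle has length three and which contains no interior vertex must consist of a single $2$-simplex; but $uvw$ is not a face of $X$, so $R$ must contain at least one interior vertex. Lemma~\ref{lemma1} then delivers the desired contradiction, completing the proof.

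The argument is short and I do not expect a serious obstacle. The one point that deserves care is the claim that the bounded side of $\gamma$ inherits the structure of a triangulated disk with boundary the prescribed $3$-cycle; this follows routinely from the fact that $\gamma$ is a simple closed curve contained in the $1$-skeleton of the planar triangulation $X$, so no closed simplex of $X$ straddles $\gamma$ and every face of $X$ lies either wholly inside $R$ or wholly outside it.
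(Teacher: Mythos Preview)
Your proof is correct and follows essentially the same route as the paper: both argue that a hypothetical third common neighbour $w$ yields a non-facial $3$-cycle $u\mbox{-}v\mbox{-}w\mbox{-}u$ bounding a triangulated disk with at least one interior vertex, to which Lemma~\ref{lemma1} applies with $u,v,w$ as the three exceptional boundary vertices. Your write-up is simply more explicit about the Jordan curve step and about why condition~(ii) holds.
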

\begin{proof}
Since $uv$ belongs to two $2$-simplices, $u$ and $v$ have at least two common vertices. If possible let $w$ be another common vertex. Then $uvw$ is a non-simplex and hence $u-v-w$ is in the boundary of a disk where all the other vertices are interior vertices which form a non-empty set. This is not possible by Lemma \ref{lemma1}.
\end{proof}

\subsection{Layered structure}

We begin by defining a compact exhaustion of $X$ by triangulated disks $\{X_k\}_{k\geq 1}$.

Fix a vertex $v_0$ in $X$.

Let
\begin{equation*}
V_0 = \{v_0\},
\end{equation*}
\begin{equation*}
V_1 = \text{ the set of }d\text{ neighbours of } V_0, \text{ and} 
\end{equation*}
\begin{equation*}
V_k = \{u \vert u \notin \bigcup\limits_{j=0}^{k-1} V_j, u \text{ has a neighbour in } V_{k-1}\}.
\end{equation*}

Let  
\begin{equation*}
X_0 = {\rm st}(v_0), \text{ and }
\end{equation*}
\begin{equation}\label{xk}
 X_k = X_{k-1} \cup \left(\bigcup\limits_{v\in V_{k-1}} \mathrm{st}(v)\right)
 \end{equation}
 for each $k\geq1 $.

Finally, let $L_k = \partial X_k$.\\

\begin{lem}\label{lemma2}
Let $k\geq1$. Then the following are true:
\begin{itemize}
\item[($a_k$)] $X_k$ is a triangulation of the $2$-disc.
\item[($b_k$)] The vertex set $V(\partial X_k)$ is $V_k$.
\item[($c_k$)] $L_{k-1}$ is the sub-complex induced by $V_{k-1}$, denoted $X[V_{k-1}]$. Equivalently, $X[V_{k-1}]$ is a cycle.
 \item[($d_k$)]  $X_{k-1}$ is the induced subcomplex $X\left[ \bigcup\limits_{j=0}^{k-1}V_j \right]$.
  \item[($e_k$)] Let $u,v\in V_{k-1}$ such that $uv$ is not an edge. Then $u$ and $v$ have no common neighbours outside $X_{k-1}$.
   \item[($f_k$)] Each $u\in V_k$ is adjacent to one or two vertices in $V_{k-1}$.
    \item[($g_k$)] $\text{deg}_{X_k}(u) =3 \text{ or }4$ for $u\in V_k$.
\end{itemize}

\end{lem}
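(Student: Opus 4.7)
The plan is to prove all seven statements $(a_k)$--$(g_k)$ simultaneously by induction on $k$. The base case $k = 1$ will be immediate from the construction: $X_1 = X_0 = \mathrm{st}(v_0)$ is a triangulated $2$-disc with boundary $\mathrm{lk}_X(v_0)$, a $d$-cycle on $V_1$, and the statements that do not have content at this initial stage are either vacuous or routinely verified.

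For the inductive step, I would assume $(a_k)$--$(g_k)$ and write $L_k = u_1\mbox{-}u_2\mbox{-}\cdots\mbox{-}u_n\mbox{-}u_1$ with $u_i \in V_k$. Since $X$ is $d$-regular, each link $\mathrm{lk}_X(u_i)$ is a $d$-cycle. By $(g_k)$ together with $(f_k)$, this cycle splits at $u_i$ into an interior arc (lying inside $X_k$) of $2$ or $3$ edges connecting $u_{i-1}$ to $u_{i+1}$ through at most one $V_{k-1}$-vertex, and a complementary exterior arc $P_i$ of $d - 2$ or $d - 3$ edges whose interior vertices lie in $V_{k+1}$; the $P_i$ are the ``fans'' contributed by the various $\mathrm{st}(u_i)$ to $X_{k+1}$.

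The heart of the induction is to show that these fans glue together to form a single simple cycle $L_{k+1}$. Two consecutive fans $P_i$ and $P_{i+1}$ share exactly one interior vertex, namely the unique second common neighbour $w_i$ of $u_i$ and $u_{i+1}$ outside $X_k$ (Corollary \ref{cor2}), which joins the end of $P_i$ to the start of $P_{i+1}$. For two non-consecutive $u_i, u_j$, I would show that $P_i$ and $P_j$ are vertex-disjoint: a hypothetical common vertex $x$ would be a common neighbour of $u_i$ and $u_j$ outside $X_k$, and attaching $x$ together with its adjacent triangles to the relevant portion of $X_k$ realises precisely one of the forbidden sub-complexes of Corollary \ref{cor1}, ruled out by the Key Lemma \ref{lemma1}. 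This disjointness immediately gives $(e_{k+1})$ and $(b_{k+1})$, and presents $X_{k+1}$ as $X_k$ with a triangulated annular band attached along $L_k$, yielding $(a_{k+1})$. Each $u \in V_{k+1}$ either lies interior to a single fan (hence has $X_{k+1}$-degree $3$ and one $V_k$-neighbour) or is a corner vertex $w_i$ shared between two consecutive fans (degree $4$, two $V_k$-neighbours), giving $(f_{k+1})$ and $(g_{k+1})$; and $(c_{k+1})$, $(d_{k+1})$ follow because any additional edge of $X$ among the $V_k$- or $V_{k+1}$-vertices would produce a chord of $L_k$ or $L_{k+1}$ violating Corollary \ref{cor1} in the same way.

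The main obstacle will be the fan-disjointness step: translating each hypothetical shared exterior neighbour of two non-consecutive boundary vertices into an explicit sub-disk of $X$ whose boundary-vertex degrees exactly match the forbidden pattern of Lemma \ref{lemma1} requires careful bookkeeping, and relies crucially on $(g_k)$ to bound the degrees of the intervening boundary vertices. The Key Lemma is thus the engine that drives the entire induction.
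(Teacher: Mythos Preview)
Your plan is correct and follows the paper's approach closely: both argue by simultaneous induction on $(a_k)$--$(g_k)$, with Lemma~\ref{lemma1} (via Corollary~\ref{cor1}) doing the work of forbidding unwanted identifications among the exterior fans. Two small points to watch when you write it up. First, the ordering: you need $(c_{k+1})$ and $(d_{k+1})$ (no chord of $L_k$; $X_k$ is induced) \emph{before} you can assert that the interior vertices of each $P_i$ actually lie in $V_{k+1}$ rather than coinciding with some $u_j\in V_k$, so these should not be deferred to the end --- the paper proves $(c_{k+1}),(d_{k+1}),(e_{k+1})$ first, directly from Corollary~\ref{cor1}, and only then builds $X_{k+1}$ by adjoining the stars $\mathrm{st}(v_1),\dots,\mathrm{st}(v_m)$ one at a time, checking disc-ness at each stage. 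Second, a slip: the interior arc of $\mathrm{lk}_X(u_i)$ inside $X_k$ passes through \emph{one or two} vertices of $V_{k-1}$ (not ``at most one''), corresponding to the two cases $\deg_{X_k}(u_i)=3$ or $4$ from $(g_k)$.
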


\begin{proof}
We prove the results by induction on $k$.  Clearly, the result is true for $k=1$.
Assume that $k\geq 1$ and the result is true for $1,2,\ldots k$.

\textit{Proof of ($c_{k+1}$)}:  This follows from Corollary \ref{cor1} (ii).\\

\textit{Proof of ($d_{k+1}$)}: This follows from ($d_k$) and ($c_{k+1}$).

\textit{Proof of ($e_{k+1}$)}:  This follows from Corollary \ref{cor1} (i). 

\textit{Proof of ($a_{k+1}$)}:  Since $X_k$ is a triangulated $2$-disc, $\partial X_k$ is a cycle on the vertex set $V_k$ (by ($a_k$) and ($b_k$)). Set $\partial X_k = C_m(v_1,\ldots v_m)$.

Then by ($g_k$) and ($c_{k+1}$), we have $$\mathrm{lk}_{X_k}(v_i) = v_{i-1} \mbox{-} a\mbox{-}b\mbox{-}v_{i+1} \text{ or } v_{i-1} \mbox{-} a \mbox{-} v_{i+1}$$ for some $a,b\in V_{k-1}$. See Figure 4 (a) and (b) for these two cases.

Since $\mathrm{lk}_X(v_i)$ is a $2$-cycle, it follows that $$\mathrm{lk}_X(v_i) = \mathrm{lk}_{X_k}(v_i) \cup P_i $$ where $P_i$ is a path (with $d-2$ or $d-1$ vertices) from $v_{i+1}$ to $v_{i-1}$.

Let
\begin{equation}\label{pathP}
P_i = v_{i+1}\mbox{-}p_i\mbox{-}\cdots \mbox{-}q_i\mbox{-}v_{i-1}
\end{equation}

Then  $$Y_1:= X_k \cup \mathrm{st}(v_1) = X_k \cup (v_1 \ast P_1)$$  and by ($d_{k+1}$) we get $X_k \cap (v_1 \ast P_1) = v_2 \mbox{-}v_1\mbox{-}v_{m}$.Thus, $Y_1$ is a triangulated $2$-disc.

In what follows we shall use this principle of ``extending" a triangulated $2$-disk several times.

Next, let $Y_2 = Y_1 \cup \mathrm{st}(v_2)$.
\begin{figure}
  \centering
  \includegraphics[scale=.4]{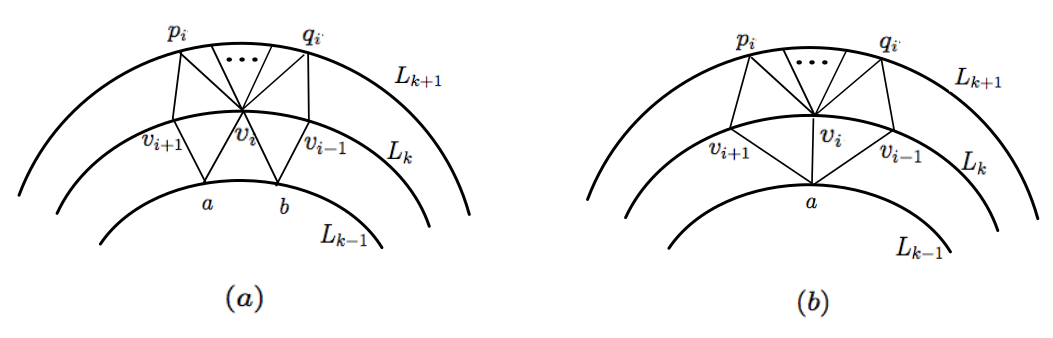}\\
  \caption{  }
\end{figure}
Then  by (\ref{pathP}) we have $$Y_2 = Y_1 \cup (v_2 \ast P_2) = Y_1 \cup (v_2 \ast\mathrm{st}(v_3 \mbox{-} p_2\mbox{-}\cdots \mbox{-} q_2)).$$
Since there exists a unique $2$-simplex of the form $v_1v_2a$ in $X_k$, it follows that there is a unique $2$-simplex containing $v_1v_2$ outside $X_k$. This simplex is $v_1v_2p_1 = v_1v_2q_2$, which implies that $q_2=p_1$ is the same point.  See Figure 5 (a).

By Corollary \ref{cor1} (ii),
$$Y_1 \cap (v_2\ast (v_3 \mbox{-}p_2\mbox{-} \cdots \mbox{-}q_2)  = v_3\mbox{-}v_2\mbox{-}q_2 = v_3\mbox{-}v_2\mbox{-}p_1.$$

 These imply that $Y_2$ is a triangulated $2$-disc.

Now, assume that $Y_i = X_k \cup \left(\bigcup\limits_{j=1}^{i} \mathrm{st}(v_j)\right)$ is a triangulated $2$-disc for $1\leq i\leq m-2$.

Let $Y_{i+1} = Y_i \cup \mathrm{st}(v_{i+1})$. Then as before, $$Y_{i+1} = Y_i \cup (v_{i+1} \ast (v_{i+2} \mbox{-} p_{i+1} \mbox{-}\cdots \mbox{-} q_{i+1}))$$ and $q_{i+1} = p_i$.

By Corollary \ref{cor1}, (i) and  (ii), we obtain
$$Y_i \cap (v_{i+1} \ast (v_{i+2} \mbox{-}p_{i+1} \mbox{-}\cdots \mbox{-} q_{i+1})) = v_{i+2}\mbox{-}v_{i+1}\mbox{-}q_{i+1} = v_{i+2}\mbox{-}y_{i+1}\mbox{-}p_i.$$

These imply that $Y_{i+1}$ is a triangulated disc.

This completes the inductive step, and so  $Y_{m-1} = X_k \cup  \left(\bigcup\limits_{j=1}^{m-1} \mathrm{st}(v_j)\right)$ is a triangulated $2$-disk.

In the final step, note that  $X_{k+1} = Y_{m-1} \cup \mathrm{st}(v_m) = Y_{m-1} \cup (v_m \ast (p_m\mbox{-}\cdots \mbox{-} q_m))$, and $p_m =q_1$, $q_m = p_{m-1}$. See Figure 5 (b).

As before, we can conclude from Corollary \ref{cor1}, (i) and  (ii), that $$Y_{m-1} \cap (v_{m} \ast (p_{m} \mbox{-}\cdots \mbox{-} q_{m})) = p_m\mbox{-}v_m\mbox{-}q_m = q_1\mbox{-}v_m \mbox{-}p_{m-1}.$$

This implies $X_{k+1}$  is a triangulated $2$-disk. This proves $(a_{k+1})$.

\begin{figure}
  \centering
  \includegraphics[scale=.5]{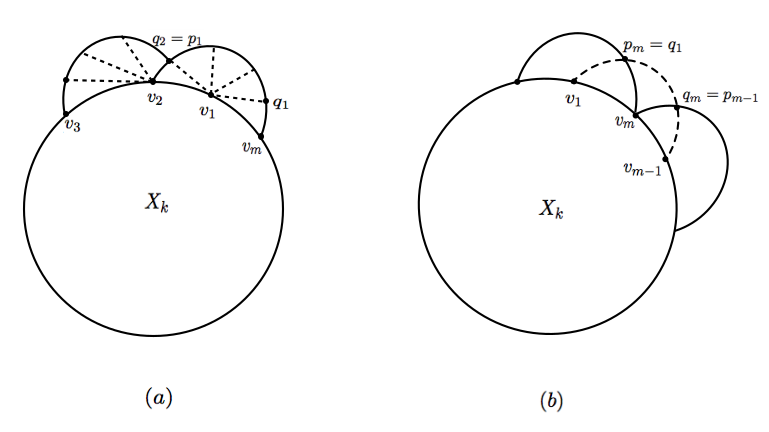}\\
  \caption{ }
\end{figure}

\textit{Proof of ($b_{k+1}$)}:   We shall use some of the notation from the proof of ($a_{k+1}$) above.

If $x\in V(\partial X_{k+1})$ then from the  preceding construction, $x\in p_i\mbox{-}\cdots \mbox{-}q_i \subset P_i \subset \mathrm{lk}_X(v_i)$ for some $v_i\in V_k$ and hence  $x\in V_{k+1}$.

On the other hand, if $y\in V_{k+1}$ then $y\in \mathrm{lk}_X(v_i)$ for some $v_i\in V_k$, and $y\notin X_k$. So, $y\in \mathrm{lk}_X(v_i) \setminus \mathrm{lk}_{X_k}(v_i) = \text{ the path } p_i\mbox{-}\cdots\mbox{-}q_i= P_i^\prime \subset P_i$. Since the path $p_i\mbox{-}\cdots \mbox{-}q_i$ is a part of $\partial X_{k+1}$, by construction, it follows that $y\in V(\partial X_{k+1})$. This proves ($b_{k+1}$).

\textit{Proof of ($f_{k+1}$)}: Since $X_{k+1}$ is a triangulated $2$-disc by ($a_{k+1}$), we know that $\partial X_{k+1}$ is a cycle. So, the paths $P_i^\prime = p_i\mbox{-}\cdots \mbox{-}q_i$ and $P_j^\prime = p_j \mbox{-} \cdots \mbox{-} q_j$ do not intersect internally (possibly only at the end vertices) for $i\neq j$.

Now, let $u\in V_{k+1}$. By ($b_{k+1}$), we have $u \in V(\partial X_{k+1})$ and is in $P_i^\prime $ for some $i\in \{1,2,\ldots ,m\}$. If $u$ is an internal vertex of $P_i^\prime$ then $u$ is adjacent to only $v_i$ of $X_k$.
If $u=p_i$ then $u$ is adjacent to $v_i$ and $v_{i+1}$. If $u=q_i$ then $u$ is adjacent to $v_i$ and $v_{i-1}$.
This proves $(f_{k+1}$).


\textit{Proof of ($g_{k+1}$)}:   Let $P_i^\prime = w_{i,1}\mbox{-}\cdots\mbox{-} w_{i,l_i}$ where $l_i = d-1$ or $d-2$, $w_{i,1} = p_i$ and $w_{i,l_i} = q_i$. Let $u\in P_i^\prime$.

As in the above proof of ($f_{k+1}$), if $u = w_{i,j}$ for $1<j<l_i$ then $\mathrm{lk}_{X_{k+1}}(u) = w_{i,j-1} \mbox{-} v_i \mbox{-} w_{i,j+1}$ and hence $\deg_{X_{k+1}}(u) = 3$.
If $u = p_i = w_{i,1}$ then  $$\mathrm{lk}_{X_{k+1}}(u) =  \mathrm{lk}_{X_{k+1}}(p_i)  = w_{i,2} \mbox{-} v_i \mbox{-} v_{i+1} \mbox{-}  w_{i+1,l_{i+1}-1}$$ and hence $\deg_{X_{k+1}}(u) = 4$.
Similarly, if $u=q_i$ then $\deg_{X_{k+1}}(u)=4$.
This proves ($g_{k+1}$). \end{proof}

\subsection{Vertex count}

The vertex set $V_k$ of the triangulated disc $X_k$ can be partitioned into three subsets as follows:

Let $A_k := \{ u \in V_k : u \text{ has two neighbours in } V_{k-1}\}$,

$B_k := \{ u \in V_k : u \text{ has a unique neighbour in } V_{k-1} \text{ which lies in }A_{k-1}\}$, and

$C_k := \{ u \in V_k : u \text{ has a unique neighbour in } V_{k-1} \text{ that is not in }A_{k-1}\}$.

Thus,
\begin{equation}\label{decomp}
V_k = A_k \sqcup B_k\sqcup C_k
\end{equation}
 for $k\geq 2$.

\begin{figure}
  \centering
  \includegraphics[scale=.45]{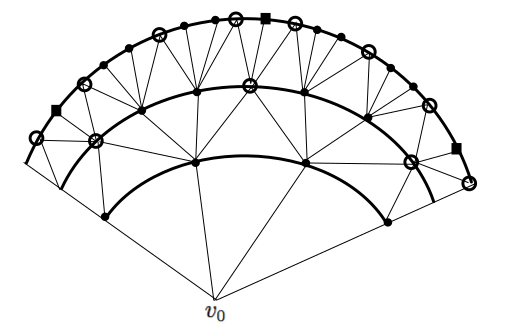}\\
  \caption{The boundary vertices of a layer $L_k$ in the triangulation are of three kinds $A_k$ (marked by a circle), $B_k$ (marked by a square), $C_k$ (marked by a disk). }
  \label{bdry}
\end{figure}

We obtain the following vertex count:\\

\begin{lem}\label{vcount}
Let $d>6$ and $k\geq 1$.  Then
$$\# V_k = \frac{d}{\sqrt{(d-6)(d-2)}} \left( \left(\frac{d-4 + \sqrt{(d-6)(d-2)}}{2}\right)^k - \left(\frac{d-4 - \sqrt{(d-6)(d-2)}}{2}\right)^k\right).$$

\end{lem}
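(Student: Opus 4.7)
\medskip

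\noindent\textbf{Plan.} The strategy is to set up a linear recurrence for $\#V_k$ by tracking how the boundary layer $L_k$ produces the next layer $L_{k+1}$, and then solve it in closed form.

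First, I would count how many vertices of $V_{k+1}$ each boundary vertex $v_i\in V_k$ contributes. By Lemma \ref{lemma2}$(g_k)$, the local degree of $v_i$ in $X_k$ is $3$ if $v_i\in B_k\cup C_k$ and $4$ if $v_i\in A_k$. Since $\mathrm{lk}_X(v_i)$ is a $d$-cycle equal to $\mathrm{lk}_{X_k}(v_i)\cup P_i$ (sharing only the two endpoints $v_{i\pm 1}$), the interior path $P_i'$ from the proof of $(a_{k+1})$ contains exactly $d-3$ vertices of $V_{k+1}$ when $v_i\in B_k\cup C_k$ and exactly $d-4$ vertices when $v_i\in A_k$. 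Hence
\begin{equation*}
\sum_{i=1}^{\#V_k} |P_i'| \;=\; (d-4)\#A_k + (d-3)\bigl(\#V_k-\#A_k\bigr) \;=\; (d-3)\#V_k - \#A_k.
\end{equation*}

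Next I would account for the double-counting of the shared endpoints $p_i=q_{i+1}$, which are exactly the vertices of $A_{k+1}$. There is one such endpoint for each edge of the cycle $\partial X_k$, and these endpoints are pairwise distinct: if two coincided, property $(f_{k+1})$ would be violated (the vertex would have three or more neighbours in $V_k$). Therefore $\#A_{k+1}=\#V_k$ for $k\geq 1$, and
\begin{equation*}
\#V_{k+1} \;=\; \sum_i |P_i'| - \#A_{k+1} \;=\; (d-4)\#V_k - \#A_k.
\end{equation*}
Substituting $\#A_k=\#V_{k-1}$ (valid for $k\geq 2$) yields the second-order recurrence
\begin{equation*}
\#V_{k+1} \;=\; (d-4)\#V_k - \#V_{k-1}, \qquad k\geq 2.
\end{equation*}

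Finally I would solve the recurrence. The characteristic polynomial $x^2-(d-4)x+1=0$ has roots
\begin{equation*}
\alpha,\beta \;=\; \frac{(d-4)\pm\sqrt{(d-6)(d-2)}}{2},
\end{equation*}
which are real and distinct since $d>6$. Writing $\#V_k=A\alpha^k+B\beta^k$, I would determine $A$ and $B$ from the initial data. Here one has to be slightly careful: the recurrence does not extend to $k=0$ with $\#V_0=1$, since $A_1=\emptyset$. The clean way is to verify directly that $\#V_1=d$ and $\#V_2=d(d-4)$ (the latter from the first identity applied at $k=1$ with $\#A_1=0$), and then observe that the auxiliary sequence defined by $a_0=0$, $a_1=d$ and the same recurrence satisfies $a_k=\#V_k$ for all $k\geq 1$. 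Solving $A+B=0$, $A(\alpha-\beta)=d$ gives $A=-B=d/\sqrt{(d-6)(d-2)}$, yielding the stated formula.

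The only genuinely subtle point is the identity $\#A_{k+1}=\#V_k$: showing the endpoints are all distinct and that each edge of $\partial X_k$ produces one, which uses $(f_{k+1})$ together with Corollary \ref{cor2}. Once this is in hand the rest is a standard linear recurrence computation; the small asymmetry at the base case ($\#A_1=0$ rather than $\#V_0=1$) is handled by shifting the initial conditions of the auxiliary sequence as above.
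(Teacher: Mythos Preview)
Your proposal is correct and follows essentially the same approach as the paper: both derive the recurrence $\#V_{k+1}=(d-4)\#V_k-\#V_{k-1}$ from the key identity $\#A_k=\#V_{k-1}$, handle the base case by the same shift to the auxiliary sequence with $y_0=0$, $y_1=d$, and solve via the characteristic equation. The only cosmetic difference is bookkeeping---the paper counts $\#A_k,\#B_k,\#C_k$ separately and sums, whereas you sum the path lengths $|P_i'|$ and subtract the $\#V_k$ doubly counted endpoints---but these amount to the same computation.
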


\begin{proof}

Let $n_k = \#V_k$.

Clearly, $\#A_k =  \#\text{ edges in } L_{k-1} = \# V_{k-1} =  n_{k-1}$.

For each element in $A_{k-1}$, there are exactly $d-6$ elements in $B_k$, and hence $$\#B_k = \# A_{k-1} \times (d-6) = n_{k-2}(d-6).$$

By the same argument, $\#C_k = (n_{k-1}-n_{k-2})(d-5)$.

Thus, we have
 $$n_k = n_{k-1} + n_{k-2}(d-6) + (n_{k-1} - n_{k-2})(d-5) = (d-4)n_{k-1} - n_{k-2}$$ for $k\geq 3$.

Also, $n_0=1, n_1=d$ and $n_2 = n_1 + n_1(d-5) = d(d-4)$.

Let 
\begin{equation}\label{init}
y_0 = 0, y_1=d
\end{equation}

and $y_k = n_k$ for $k\geq 2$.

Then we have the difference equation
\begin{equation}\label{diffeq}
y_k - (d-4)y_{k-1} + y_{k-2} = 0
\end{equation}
for $k\geq 2$.

(Note that defining the new sequence by the shift allows us to start with a easier set of initial conditions (\ref{init}).)

We now employ the standard technique of solving this difference equation:

The characteristic equation is $$\lambda^2 - (d-4)\lambda + 1=0$$

such that the two roots are 
$$\lambda = \frac{d-4 \pm \sqrt{(d-6)(d-2)}}{2}. $$

For $d>6$,  these roots are distinct and we have
$$y_k = c_1\left(\frac{d-4 + \sqrt{(d-6)(d-2)}}{2}\right)^k  + c_2 \left(\frac{d-4 - \sqrt{(d-6)(d-2)}}{2}\right)^k$$

Using the initial conditions (\ref{init}) we obtain
$$c_1 = -c_2 = \frac{d}{\sqrt{(d-6)(d-2)}} $$

which completes the proof.
\end{proof}

\textit{Remark.}  The values of $n_k$ for small $k$ and $d=7$ and $8$, are shown in the table below:
\begin{center}
  \begin{tabular}{|l|l|l|l|l|l|} 
    \hline
    $n_1$ & $n_2$ & $n_3$ & $n_4$ & $n_5$ & $n_6$\\ \hline
    $k=7$ & 21 & 56  & 147  & 385 & 1008  \\ \hline
    $k=8$  & 32 & 120  & 448  & 1672 & 6240   \\
    \hline
  \end{tabular}
\end{center}

This vertex count is consistent with the exponential growth (in $k$)  of the circumference of a hyperbolic disk of radius $k$.
In contrast, for $d=6$ the roots of the characteristic equation for (\ref{diffeq}) are equal, and we obtain the linear growth $n_k =6k$ for $k\geq 1$.

\medskip

\subsection{Uniqueness}
Finally, we have:

\begin{lem}\label{lemma5}
$X_k$ determines $X_{k+1}$ uniquely for each $k\geq 1$.
\end{lem}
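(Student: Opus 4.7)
The plan is to show that the combinatorial type of $X_{k+1}$ is entirely forced by that of $X_k$, essentially by re-reading the star-by-star extension already carried out in the proof of Lemma \ref{lemma2}. From properties $(a_k)$, $(b_k)$ and $(g_k)$ we read off $\partial X_k = C_m(v_1,\ldots,v_m)$ and each degree $\deg_{X_k}(v_i) \in \{3,4\}$ directly from $X_k$. Since $X$ is $d$-regular, $\mathrm{lk}_X(v_i)$ is a $d$-cycle containing $\mathrm{lk}_{X_k}(v_i)$ as a sub-path, so the complementary arc $P_i$ from $v_{i+1}$ to $v_{i-1}$ must have either $d-1$ or $d-2$ vertices (depending on whether $\deg_{X_k}(v_i)$ equals $3$ or $4$). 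In particular, the number of new vertices contributed by each boundary star is determined by $X_k$ alone.

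Next I would establish how the arcs $P_i$ glue to each other along $\partial X_k$. The edge $v_i v_{i+1}$ lies in $\partial X_k$, hence belongs to exactly one $2$-simplex of $X_k$, and therefore to exactly one further $2$-simplex $v_i v_{i+1} w$ of $X$. This third vertex $w$ is simultaneously the vertex of $P_i$ adjacent to $v_{i+1}$ (namely $p_i$) and the vertex of $P_{i+1}$ adjacent to $v_i$ (namely $q_{i+1}$), so the identifications $p_i = q_{i+1}$ for $i = 1,\ldots,m$ (indices mod $m$) are forced. This is precisely the matching that was invoked repeatedly in the proof of $(a_{k+1})$ when the stars $\mathrm{st}(v_i)$ were attached one by one.

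Finally, I would verify that these are the only coincidences among the newly introduced vertices. Any further identification --- an interior vertex of $P_i$ coinciding with an interior vertex of $P_j$ for $|i-j| \geq 2$, or with a vertex already lying in $X_{k-1}$ --- would produce a triangulated sub-disc of $X$ all of whose boundary vertices have degree $d-1$ or $d-2$ apart from at most three exceptions, directly contradicting Lemma \ref{lemma1}, Corollary \ref{cor1}, or Corollary \ref{cor2}. With these distinctness checks granted, the simplicial complex $X_{k+1} = X_k \cup \bigcup_{v \in V_k} \mathrm{st}(v)$ is determined up to combinatorial isomorphism by $X_k$.

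I expect the genuinely delicate step to be this last distinctness verification; but since Lemma \ref{lemma2} has already been established for the relevant value of $k$, and in particular $(a_{k+1})$ was proved by exactly the star-by-star bookkeeping I am reusing, the necessary non-coincidences follow from the same Euler-characteristic obstruction (Lemma \ref{lemma1}) that powers the rest of \S4.
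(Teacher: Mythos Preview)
Your proposal is correct and follows essentially the same route as the paper: both arguments read off $\partial X_k$ and the degrees $\deg_{X_k}(v_i)\in\{3,4\}$ (which is exactly the paper's $A_k$ versus $B_k\cup C_k$ distinction), force the gluings $p_i=q_{i+1}$ from the unique outer $2$-simplex on each boundary edge, and rule out further coincidences via Lemma~\ref{lemma1}/Corollary~\ref{cor1}. The only cosmetic difference is that the paper carries along the $A_{k+1}/B_{k+1}/C_{k+1}$ labels for the new boundary vertices, whereas you simply invoke the bookkeeping already done in the proof of $(a_{k+1})$ in Lemma~\ref{lemma2}.
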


\begin{proof}

Assume $d\geq 7$ and let $k\geq 1$.

Let $L_k $ be the cycle $C_{n_k}(v_1, \ldots v_{n_k})$ where $n_k = \# V_k$ as in the previous lemma.

Assume without loss of generality that $v_1 \in C_k$, where $V_k = A_k \sqcup B_k \sqcup C_k$ as in the previous lemma.

Then we have $\mathrm{lk}_{X_k}(v_1) = v_2 \mbox{-}w_1\mbox{-}v_{n_k}$  where $w_1\in V_{k-1}$ as shown in Figure 7.

(We can also assume that $w_1 \in C_{k-1}$ for $k\geq 2$.) \\

\begin{figure}[h]
  \centering
  \includegraphics[scale=.45]{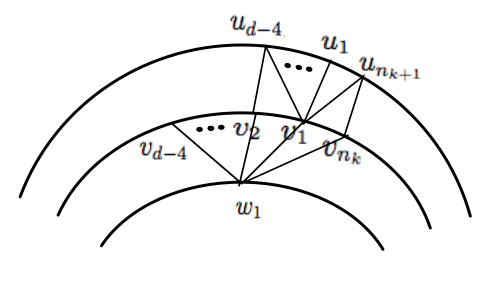}\\
  \caption{ }
\end{figure}

By Lemma \ref{lemma2} we know that $\mathrm{lk}_{X_{k+1}}(v_1) = \mathrm{lk}_X(v_1)$ is of the form $$C_d(v_2,w_1,v_{n_k}, u_{n_{k+1}}, u_1,\ldots ,u_{d-4})$$
where  $u_{n_{k+1}} , u_{d-4} \in A_{k+1}$ , $u_1,\ldots ,u_{d-5} \in C_{k+1}$ and the path $u_{n_{k+1}} \mbox{-} u_1\mbox{-}\cdots \mbox{-} u_{d-4}$ is in $L_{k+1}$.
Then $v_1u_{n_{k+1}}u_1$, $v_1u_1u_2$, $\ldots$, $v_1u_{d-5}u_{d-4}$, $v_1 v_{n_k}u_{n_{k+1}}$, $v_1v_2v_{d-4}$ are $2$-simplices in the triangulation.

We shall now repeat the same argument to determine the remaining triangles of $X_{k+1}$:

Assume we know $\mathrm{lk}_X(v_1),\ldots ,\mathrm{lk}_X(v_i)$ for $1\leq i\leq n_k-1$.

Note that we know $X_k$ and hence we know the nature of $v_i$, namely whether it lies in $A_k,B_k$ or $C_k$.\\

\textit{Case 1: $v_{i+1} \in C_k \sqcup B_k$. } (See Figure 8 (a).)

In this case, by the argument above, $\mathrm{lk}_X(v_{i+1})$ is uniquely determined to be  $C_d(v_{i+2}, w_j, v_i, u_l,\ldots ,u_{l+d-4})$ for some  vertices $u_{l+1}, \ldots u_{l+ d-4} \in V_{k+1}$ (Again by the same argument as above and in the proof of Lemma \ref{lemma2}$, u_l, u_{l+d-4} \in A_{k+1}$ and $u_{l+1}, \ldots ,u_{l+d-5} \in C_{k+1}$.)   Also, the path $u_l\mbox{-}u_{l+1}\mbox{-}\cdots \mbox{-} u_{l+d-4}$ is in $L_{k+1}$.

\textit{Case 2: $v_{i+1} \in A_k$. } (See Figure 8 (b).)

In this case, $\mathrm{lk}_{X_k}(v_{i+1})$  is of the form $v_{i+2}\mbox{-}w_{j+1}\mbox{-}w_j\mbox{-}v_i$. Since $v_{i+1}$ is an internal vertex of $X_{k+1}$, we have that $\mathrm{lk}_{X_{k+1}}(v_{i+1})$ is a $d$-cycle (and hence equals $\mathrm{lk}_{X}(v_{i+1})$). Assume it is $C_d(v_{i+2}, w_{j+1},w_j,v_i, u_l,u_{l+1}, \ldots u_{l+d-5})$ for some vertices $u_{l+1}, \ldots ,u_{l+d-5} \in V_{k+1}$ and $\{u_{l+1}, \ldots ,u_{l+d-5}\} \cap \{u_0,\ldots ,u_{l-1}\} = \emptyset$ (or $\{u_1\}$ if $i+1 = n_k$).

It follows that $u_l, u_{l+d-5} \in A_{k+1}$ ad $ u_{l+1},\ldots ,u_{l+d-6} \in B_{k+1}$.

By induction, this determines $\mathrm{lk}_X(v_{i})$ uniquely for  all $i=1,2,\ldots ,n_k$. Note that we also determine  which of the subsets of $A_k,B_k,C_k$ each vertex belongs to (see \eqref{decomp}) , and hence the triangles in $X_{k+1}$.

Therefore, by the definition of $X_j$ in (\ref{xk}) we get $X_{k+1}$ uniquely.
\end{proof}

\begin{figure}
  \centering
  \includegraphics[scale=.4]{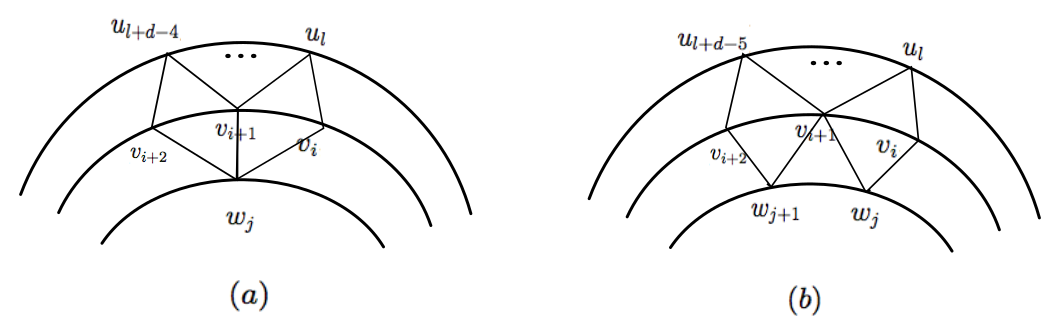}\\
  \caption{ }
\end{figure}

Existence has already been proved in the previous section. This completes the proof of Theorem \ref{thm1}.

\section{Proof of Theorem \ref{thm0}}

The proof of Theorem \ref{thm0} can be now easily deduced:

Let $S$ be a  $d$-regular triangulated surface. 

By Lemma \ref{lem1}, when $d=3,4,5$, the surface $S$ is a sphere or projective plane:

In the first case, the triangulation is combinatorially equivalent to the standard triangulation of the boundary of the tetrahedron, octahedron or icosahedron. Since the boundary of each such Platonic solid can be radially projected to a triangulation  of the sphere, and is hence geometric with respect to the spherical (constant positive curvature) metric. The remaining case of the projective plane is obtained as a two-fold quotient of the icosahedral triangulation; the quotient is by an antipodal map that is an isometry, and the resulting triangulation is also geometric.

When $d\geq 6$, the surface admits a constant curvature metric by Lemma \ref{lem2}.   The triangulation then lifts to a $d$-regular triangulation of the corresponding model space ($\mathbb{E}^2$ or $\mathbb{H}^2$) which is homeomorphic to $\mathbb{R}^2$. 

In the case that $d=6$, such a triangulation of $\mathbb{E}^2$ is combinatorially equivalent to the hexagonal triangulation by a result of \cite{Datta1}.
Applying Theorem \ref{thm1} to the case when $d>6$, the triangulation of $\mathbb{H}^2$ is combinatorially equivalent to the hexagonal tiling of $\mathbb{E}^2$ or the geodesic tiling of $\mathbb{H}^2$  generated by the Schwarz triangle group $\Delta(3,d)$. 

In either case, the original surface $S$ is obtained by isometric identifications along the edges of a (possibly infinite) fundamental domain, and acquires a geometric triangulation that is combinatorially equivalent to the original one.

\bibliographystyle{amsalpha}

\bibliography{Tiling-refs}

\end{document}